\def\NZQ{\mathbb}               % the font for N,Z,Q,R,C
\def\NN{{\NZQ N}}
\def\ZZ{{\NZQ Z}}
\def\CC{{\NZQ C}}
\def\F2{{\NZQ F}_2}
\def\opn#1#2{\def#1{\operatorname{#2}}} % to make operators
\opn\chara{char} \opn\length{\ell} \opn\pd{pd} \opn\rk{rk}
\opn\projdim{proj\,dim} \opn\injdim{inj\,dim} \opn\rank{rank}
\opn\depth{depth} \opn\codepth{codepth} \opn\grade{grade}
\opn\height{height} \opn\embdim{emb\,dim} \opn\codim{codim}
\opn\Tr{Tr} \opn\bigrank{big\,rank}
\opn\superheight{superheight}\opn\lcm{lcm}
\opn\trdeg{tr\,deg}%
\opn\reg{reg} \opn\lreg{lreg} \opn\skel{skel}
\opn\Gr{Gr}
\opn\ann{ann}
\opn\sign{sign}
\opn\div{div} \opn\Div{Div} \opn\cl{cl} \opn\Cl{Cl}
\opn\Spec{Spec} \opn\Supp{Supp} \opn\supp{supp} \opn\Sing{Sing}
\opn\Ass{Ass}\opn\fdepth{fdepth}
\opn\Ann{Ann} \opn\Rad{Rad} \opn\Soc{Soc}
\opn\Sym{Sym} \opn\Ker{Ker} \opn\Coker{Coker} \opn\Im{Im}
\opn\Hom{Hom} \opn\Tor{Tor} \opn\Ext{Ext} \opn\End{End}
\opn\Aut{Aut} \opn\id{id} \opn\ini{in} \opn\tr{tr}
\opn\nat{nat}\opn\it{it}
\opn\pff{proof}%   \pf exists already
\opn\Pf{proof} \opn\GL{GL} \opn\SL{SL} \opn\mod{mod} \opn\ord{ord}
\opn\aff{aff} \opn\con{conv} \opn\relint{relint} \opn\st{st}
\opn\lk{lk} \opn\cn{cn} \opn\core{core} \opn\vol{vol}
\opn\link{link} \opn\star{star} \opn\skel{skel} \opn\indeg{indeg}
\opn\Ass{Ass} \opn\Min{Min} \opn\sdepth{sdepth} \opn\depth{depth}
\opn\gr{gr}
\def\pot#1#2{#1[\kern-0.28ex[#2]\kern-0.28ex]}
\opn\dirlim{\underrightarrow{\lim}}
\opn\inivlim{\underleftarrow{\lim}}
\let\iso=\cong
\let\to=\rightarrow
\def\Implies{\ifmmode\Longrightarrow \else
     \unskip${}\Longrightarrow{}$\ignorespaces\fi}
\def\implies{\ifmmode\Rightarrow \else
     \unskip${}\Rightarrow{}$\ignorespaces\fi}
\def\iff{\ifmmode\Longleftrightarrow \else
     \unskip${}\Longleftrightarrow{}$\ignorespaces\fi}
\opn\d{d}
\newtheorem{Theorem}{Theorem}[section]
\newtheorem{Lemma}[Theorem]{Lemma}
\newtheorem{Corollary}[Theorem]{Corollary}
\newtheorem{Remark}[Theorem]{Remark}
\newtheorem{Example}[Theorem]{Example}
\newtheorem{Definition}[Theorem]{Definition}
\let\epsilon\varepsilon
\let\phi=\varphi
\let\kappa=\varkappa
\def\qed{\ifhmode\textqed\fi
   \ifmmode\ifinner\quad\qedsymbol\else\dispqed\fi\fi}
\def\textqed{\unskip\nobreak\penalty50
    \hskip2em\hbox{}\nobreak\hfil\qedsymbol
    \parfillskip=0pt \finalhyphendemerits=0}
\def\dispqed{\rlap{\qquad\qedsymbol}}
\opn\Gin{Gin}
\def\CC{{\mathcal C}}
\def\FF{{\mathcal F}}
\newcommand{\ri}{\mathrm{ri}}
\newcommand{\HP}{\mathrm{HP}}
\opn\inii{in} \opn\inim{inm} \opn\rate{rate}
\numberwithin{equation}{section}
\title{Some algebraic invariants of edge ideal of circulant graphs}
\keywords{}
\author{Giancarlo Rinaldo}
\address{Department of Mathematics\\
University of Trento\\
via Sommarive, 14\\
38123 Povo (Trento), Italy
%\email{rinaldo@dipmat.unime.it}
}
\date{}
\begin{document}
\maketitle

\begin{abstract}
Let $G$ be the circulant graph $C_n(S)$ with $S\subseteq\{ 1,\ldots,\left \lfloor\frac{n}{2}\right \rfloor\}$ and let $I(G)$ be its edge ideal in the ring $K[x_0,\ldots,x_{n-1}]$. Under the hypothesis that $n$ is prime we : 1) compute the regularity index of $R/I(G)$; 2) compute the Castelnuovo-Mumford regularity when $R/I(G)$ is Cohen-Macaulay; 3) prove that the circulant graphs with $S=\{1,\ldots,s\}$ are sequentially $S_2$ .
We end characterizing the Cohen-Macaulay circulant graphs of Krull dimension $2$ and computing their  Cohen-Macaulay type and Castelnuovo-Mumford regularity. 
\end{abstract}

\section*{Introduction}\label{sec:intro}
Let $S\subseteq\{ 1,2,\ldots,\left \lfloor\frac{n}{2}\right \rfloor\}$. The \textit{circulant graph} $G:=C_n(S)$ is a graph with vertex set $\ZZ_n=\{0,\ldots,n-1\}$ and edge set $E(G) := \{ \{i, j\} \mid |j-i|_n \in S \}$ where $|k|_n=\min\{|k|,n-|k|\}$.

Let $R= K[x_0, \dots, x_{n-1}]$ be the polynomial ring on $n$ variables over a field $K$. The \textit{edge ideal} of $G$, denoted by $I(G)$, is the ideal of $R$ generated by all square-free monomials $x_i x_j$ such that $\{i,j\} \in E(G)$. Edge ideals of a graph have been introduced by Villarreal [11] in 1990, where he studied the Cohen--Macaulay property of such ideals. Many authors have focused their attention on such ideals (see \cite{HH}, \cite{FT}).  A known fact about Cohen-Macaulay edge ideals is that they are well-covered.

%We are interested in obtaining information regarding the algebraic invariants of the ring $R/I(G)$, where $G$ is a circulant graph. 

A graph $G$ is said \textit{well-covered} if all the maximal independent sets of $G$ have the same cardinality. Recently well-covered circulant graphs have been studied (see \cite{BH0}, \cite{BH1}, \cite{Ho}).  In \cite{MTW} and \cite{EMT} the authors studied well-covered circulant graphs that are Cohen-Macaulay.

In this article we put in relation the values $n$, $S$ of a circulant graph $C_n(S)$  and algebraic invariants of $R/I(G)$.  In particular we study the regularity index, the Castelnuovo-Mumford regularity, the Cohen-Macaulayness and Serre's condition of $R/I(G)$.

In the first section we recall some concepts and notations and preliminary notions.

In the second section under the hypothesis that $n$ is prime we observe that the regularity index of $R/I(G)$ is $1$ obtaining as a by-product the Castelnuovo-Mumford regularity of the ring when it is Cohen-Macaulay. 

In the third section we prove that each $k$-skeleton of the simplicial complex of the independent set of $G=C_n(S)$ is connected when $n$ is prime. As an application we prove that the circulant graphs $C_n(\{1,\ldots,s\})$ (studied in \cite{BH0}, \cite{BH1}, \cite{EMT}, \cite{Ho}, \cite{Mo},\cite{MTW}) are sequentially $S_2$ (see \cite{HTYZ}).

In the last section we characterize the Cohen-Macaulay circulant graphs of Krull dimension $2$ and compute their  Cohen-Macaulay type and Castelnuovo--Mumford regularity. 

\section{Preliminaries}\label{sec:pre}
In this section we recall some concepts and notations on graphs and on simplicial complexes that we will use in the article. Let $G$ be a simple graph with vertex set $V(G)$ and the edge set $E(G)$. A subset $C$ of $V(G)$ is called a \textit{clique} of $G$ if for all $i$ and $j$ belonging to $C$ with $i \neq j$ one has $\{i, j\} \in E(G)$. A subset $A$ of $V(G)$ is called an \textit{independent set} of $G$ if no two vertices of $A$ are adjacent.The \textit{complement graph} $\bar{G}$ of $G$ is the graph with vertex set $V(\bar{G})=V(G)$ and edge set $E(\bar{G})= \{\{u,v\}\in V(G)^2\mid \{u,v\}\notin E(G)\}$.

Set $V = \{x_1, \ldots, x_n\}$. A \textit{simplicial complex} $\Delta$ on the vertex set $V$ is a collection of subsets of $V$ such that
\begin{enumerate}
\item[(i)] $\{x_i\} \in \Delta$  for all $x_i \in V$;
\item[(ii)] $F \in \Delta$ and $G\subseteq F$ imply $G \in \Delta$.
\end{enumerate}
An element $F \in \Delta$ is called a \textit{face} of $\Delta$. A maximal face of $\Delta$  with respect to inclusion is called a \textit{facet} of $\Delta$.
%A vertex $i$ of $\Delta$ is called a free vertex of $\Delta$ if $i$ belongs to exactly one facet.

If $\Delta$ is a simplicial complex  with facets $F_1, \ldots, F_q$, we call $\{F_1, \ldots, F_q\}$ the facet set of $\Delta$ and we denote it by $\FF(\Delta)$.
The dimension of a face $F \in \Delta$ is $\dim F = |F|-1$, and the dimension of $\Delta$ is the maximum of the dimensions of all facets in $\FF(\Delta)$. If all facets of $\Delta$ have the same dimension, then $\Delta$ is called \textit{pure}. Let $d-1$ the dimension of $\Delta$ and let $f_i$ be the number of faces of $\Delta$ of dimension $i$ with the convention that $f_{-1}=1$. Then the $f$-vector of $\Delta$ is the $d$-tuple $f(\Delta)=(f_{-1},f_0,\ldots,f_{d-1})$. The $h$-vector of $\Delta$ is $h(\Delta)=(h_0,h_1,\ldots,h_d)$ with
\[
 h_k=\sum_{i=0}^{k}(-1)^{k-i}\binom{d-i}{k-i} f_{i-1}.
\]
The sum
\[
 \widetilde{\chi}(\Delta)=\sum_{i=-1}^{d-1}(-1)^{i}f_i
\]
is called the reduced Euler characteristic of $\Delta$ and $h_d=(-1)^{d-1}\widetilde{\chi}(\Delta)$. Given any simplicial complex $\Delta$ on $V$, we can associate a monomial ideal $I_\Delta$ in the polynomial ring $R$ as follows:
\[
 I_\Delta=(\{x_{j_1}x_{j_2}\cdots x_{j_r}: \{x_{j_1},x_{j_2},\ldots,x_{j_r}\}\notin \Delta\}).
\]
$R/I_\Delta$ is called Stanley-Reisner ring and its Krull dimension is $d$. If $G$ is a graph we call the \textit{independent complex} of $G$ by
\[
\Delta(G)=\{A\subset V(G): A \mbox{ is an independent set of }G\}. 
\]
The \textit{clique complex} of a graph $G$ is the simplicial complex whose faces are the cliques of $G$.
Let $\mathbb{F}$ be the minimal free resolution of the quotient ring $R/I(G)$.
Then
\[\mathbb{F}:\  0\to F_p \to \cdots \to F_{p-1}\to\cdots\to F_0\to R/I(G)\rightarrow 0\]
with $F_i=\mathop\oplus\limits_j R(-j)^{\beta_{ij}}$. The numbers $\beta_{ij}$ are  called the Betti numbers of $\mathbb{F}$. The Castelnuovo--Mumford regularity of $R/I(G)$, denoted by $\reg R/I(G)$, is defined by
\[
\reg  R/I(G)  = \max \{ j - i : \beta_{ij}\neq 0 \}.
\]
A graph $G$ is said Cohen-Macaulay if the ring $R/I(G)$, or equivalentelly $R/I_{\Delta(G)}$ is Cohen-Macaulay (over the field $K$) (see \cite{BH2}, \cite{MS}, \cite{Vi0}).
The Cohen-Macaulay type of $R/I(G)$ is equal to the last total Betti number in the minimal free  resolution $\mathbb{F}$.

We end this section with the following
\begin{Remark}\label{cliqueindependent}
Let $T=\{1,2,\ldots,\left \lfloor\frac{n}{2}\right \rfloor\}$ and $G$ be a circulant graph on $S\subseteq T$ with $s=|S|$, then:
\begin{enumerate}
 \item $\bar{G}$ is a circulant graph on $\bar{S}=T\setminus S$;
 \item The clique complex of $\bar{G}$ is the independent complex of $G$, $\Delta(G)$;
 \item %Applying Euler's \textit{degree sum formula} follows 
\[ 
|E(G)|=
\left \{
  \begin{tabular}{cl}
  $ns-\frac{n}{2}$ & if $n$ is even and $\frac{n}{2}\in S$\\
  $ns$ & otherwise.  
  \end{tabular}
\right. 
\]
\end{enumerate}
\end{Remark}

\section{Regularity and connectedness of the independent complex of circulant graphs of prime order}

We recall some basic facts about the regularity index (see also \cite{Va}). Let $R$ be standard graded ring and $I$ be a homogeneous ideal. The \textit{Hilbert function} $H_{R/I} : \mathbb{N} \rightarrow \mathbb{N}$ is defined by 
\[
H_{R/I} (k) := \dim_K (R/I)_k
\]
and the Hilbert-Poincar\'e series of $R/I$ is given by
\[
\HP_{R/I} (t) := \sum_{k \in \NN} H_{R/I}(k) t^k. 
\]
By Hilbert-Serre theorem, the Hilbert-Poincar\'e series of $R/I$ is a rational function, that is
\[
\HP_{R/I}(t) = \frac{h(t)}{(1-t)^n}.
\]
There exists a unique polynomial such that $H_{R/I}(k) = P_{R/I}(k)$ for all $k\gg 0$. 
The minimum integer $k_0 \in \NN$ such that $H_{R/I}(k) = P_{R/I}(k) \; \forall \ k \geq k_0$ is called \textit{regularity index} and we denote it by $\ri(R/I)$.  
\begin{Remark}\label{remri}
 Let $R/I_\Delta$ be a Stanley-Reisner ring. Then 
 \[ 
\ri(R/I_\Delta)=
\left \{
  \begin{tabular}{cl}
  $0$ & if $h_d=0$  \\
  $1$ & if $h_d\neq 0$  
  \end{tabular}
\right. 
\]
\end{Remark}
\begin{proof}
By the hypothesis the Hilbert series can be represented by the reduced rational function
 \[
 \frac{h(t)}{(1-t)^d}  
 \]
 where $d$ is the Krull dimension of $R/I_\Delta$ and $h(t)=\sum_{i=0}^{d} h_i t^i$ where $h_i$ are the entries of the $h$-vector of $\Delta$. We observe that $\ri(R/I)=\max(0, \deg h(t)-d+1)$. If $\ri(R/I_\Delta)>0$ then $\deg h(t)>d-1$. But since $\deg h(t)\leq d$ we have $\deg h(t)=d$. Therefore $h_d\neq 0$ and $\ri(R/I_\Delta)=1$. The other case follows by the same argument.
\end{proof}

\begin{Lemma}\label{primeFvector}
Let $G$ be a circulant graph on $S$ with $n$ prime. Then the entries of the $f$-vector of $\Delta(G)$ are
\[
f_i=n f'_i 
\]
with $0\leq i\leq d-1$ and $f'_i=f_{i,0}/(i+1)\in \NN$ where $f_{i,0}$ is the number of faces of dimension $i$ containing the vertex $0$.
\end{Lemma}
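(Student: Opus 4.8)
The plan is to exploit the cyclic symmetry of $C_n(S)$ together with the primality of $n$. The translations $\sigma_k\colon j\mapsto j+k \pmod n$ are graph automorphisms of $G$, since adjacency depends only on $|j-i|_n$; hence each $\sigma_k$ permutes the independent sets and induces a dimension-preserving action of the group $\ZZ_n$ on the faces of $\Delta(G)$. Throughout I assume $S\neq\emptyset$, so that $G$ has an edge (the edgeless case is degenerate, $\Delta(G)$ being the full simplex).

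First I would fix $i$ with $0\le i\le d-1$ and let $\ZZ_n$ act on the set of $i$-dimensional faces of $\Delta(G)$. The stabilizer of a face $F$ is a subgroup of $\ZZ_n$, so by primality it is either trivial or all of $\ZZ_n$; in the latter case $F$ would be a nonempty subset invariant under the transitive action on the vertices, forcing $F=V(G)$ and $\dim F=n-1$. But since $G$ has an edge, no independent set equals $V(G)$, so $d\le n-1$ and every face of dimension $i\le d-1$ has $i+1\le d<n$ vertices. Consequently every such face has trivial stabilizer, its orbit has exactly $n$ elements, and writing $m_i$ for the number of orbits we obtain $f_i=n\,m_i$. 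This already yields $f_i=n f'_i$ with $f'_i=m_i\in\NN$.

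It remains to identify $m_i$ with $f_{i,0}/(i+1)$. For this I would count, inside a single orbit $\{F+k : k\in\ZZ_n\}$, how many members contain the vertex $0$: the translate $F+k$ contains $0$ exactly when $-k\in F$, and since $|F|=i+1$ this occurs for exactly $i+1$ values of $k$ (and these give distinct faces, the stabilizer being trivial). Summing over the $m_i$ orbits, the total number of $i$-faces through $0$ is $f_{i,0}=(i+1)\,m_i$, whence $f'_i=m_i=f_{i,0}/(i+1)$, which also confirms the integrality.

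The one point requiring care—the main obstacle—is the stabilizer analysis: one must rule out that a face of dimension $i\le d-1$ is globally $\ZZ_n$-invariant. This is precisely where primality of $n$ enters, guaranteeing the dichotomy trivial-or-everything for stabilizers, combined with the elementary fact that the only nonempty $\ZZ_n$-invariant subset of the vertices is the full set, which is excluded once $G$ has an edge. Everything else is bookkeeping via the orbit--stabilizer theorem and the orbitwise double count.
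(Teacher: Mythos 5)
Your proof is correct, but it reaches the conclusion by a different mechanism than the paper. The paper forms the incidence matrix between $i$-dimensional faces and vertices and double-counts its nonzero entries: each row contributes $i+1$ and, by the transitivity of the cyclic symmetry, each column contributes $f_{i,0}$, giving the single relation $n f_{i,0}=(i+1)f_i$; primality of $n$ is then used purely arithmetically, via $\gcd(n,i+1)=1$, to extract the divisibilities $n\mid f_i$ and $(i+1)\mid f_{i,0}$. You instead use primality structurally: the $\ZZ_n$-action on nonempty faces has trivial stabilizers (trivial-or-everything dichotomy, plus the fact that the only translation-invariant nonempty vertex set is $V(G)$, which is not independent once $S\neq\emptyset$), so the action is free, every orbit has size $n$, and both identities $f_i=n m_i$ and $f_{i,0}=(i+1)m_i$ fall out of the orbit count with no further number theory. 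What your route buys is a transparent interpretation of $f'_i$ as the number of $\ZZ_n$-orbits of $i$-faces (so its integrality is automatic rather than deduced), and an explicit treatment of the degenerate case $S=\emptyset$, where the statement actually fails at $i=n-1$ and where the paper's coprimality step would also break down since $i+1=n$; the paper's argument is marginally more elementary, needing only the counting identity and one application of Euclid's lemma.
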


\begin{proof}
Call $\FF_i\subset \Delta$ the set of faces of dimension $i$, that is 
\[
\FF_i=\{F_1,\ldots,F_{f_i}\}. 
\]
Let $f_{i,j}$, number of faces in $\FF_i$  containing a given vertex $j=0,\ldots, n-1$. Since $G$ is circulant
\[
 f_{i,j}=f_{i,0} \mbox{ for all }j\in \{0,\ldots,n-1\}.
\]
Let $A\in \F2^{f_i\times n}=(a_{jk})$ be the incidence matrix with $a_{jk}=1$ if the vertex $k-1$ belongs to the facet $F_j$ and $0$ otherwise. We observe that each row has exactly $i+1$ $1$-entries. Hence summing the entries of the matrix we have $(i+1) f_i$. Moreover each column has exactly $f_{i,j}$ non zero entries. That is
\[
n f_{i,0}= (i+1) f_i.
\]
Since $n$ is prime the assertion follows.
\end{proof}

\begin{Theorem}\label{riprime}
 Let $G$ be a circulant graph on $S$ with $n$ prime. Then
 \[
  \ri(R/I(G))=1.
 \]
\end{Theorem}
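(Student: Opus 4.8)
The plan is to reduce the claim to the nonvanishing of a single numerical invariant and then to extract that nonvanishing from a divisibility argument supplied by Lemma \ref{primeFvector}. By Remark \ref{remri} the regularity index is either $0$ or $1$, and it equals $1$ precisely when $h_d \neq 0$, where $d$ is the Krull dimension of $R/I(G)$ and $h_d$ is the top entry of the $h$-vector of $\Delta(G)$. Since $h_d = (-1)^{d-1}\widetilde{\chi}(\Delta(G))$, the whole theorem comes down to showing that the reduced Euler characteristic $\widetilde{\chi}(\Delta(G))$ does not vanish.

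First I would write out $\widetilde{\chi}(\Delta(G)) = \sum_{i=-1}^{d-1}(-1)^i f_i$ and isolate the exceptional term $i=-1$, which contributes $(-1)^{-1}f_{-1} = -1$ because $f_{-1}=1$. For every $i$ with $0 \le i \le d-1$, Lemma \ref{primeFvector} gives $f_i = n f'_i$ with $f'_i \in \NN$, so each remaining summand is divisible by $n$. Collecting the terms yields $\widetilde{\chi}(\Delta(G)) = -1 + n\sum_{i=0}^{d-1}(-1)^i f'_i$, whence $\widetilde{\chi}(\Delta(G)) \equiv -1 \pmod{n}$. As $n$ is prime we have $n \ge 2$, so $-1 \not\equiv 0 \pmod n$, forcing $\widetilde{\chi}(\Delta(G)) \neq 0$. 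Consequently $h_d \neq 0$, and Remark \ref{remri} gives $\ri(R/I(G)) = 1$.

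I do not expect a genuine obstacle here: the substance of the argument is already carried by Lemma \ref{primeFvector}, so the remaining work is essentially assembly. The only point requiring care is the bookkeeping of the $i=-1$ term, which is exactly the one summand \emph{not} divisible by $n$ and is therefore the very source of the nonvanishing; the cleanest way to package this is the congruence $\widetilde{\chi}(\Delta(G)) \equiv -1 \pmod n$ rather than a direct sign estimate. Finally I would verify the harmless edge cases that make the formalism applicable, namely that $d \ge 1$ so $\Delta(G)$ genuinely has faces of nonnegative dimension; this holds because $n \ge 2$ and every singleton is an independent set, so $\dim \Delta(G) \ge 0$ and the $h$-vector description underlying Remark \ref{remri} is in force.
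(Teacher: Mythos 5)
Your proof is correct and follows essentially the same route as the paper: reduce via Remark \ref{remri} to $h_d\neq 0$, i.e.\ to $\widetilde{\chi}(\Delta(G))\neq 0$, then use Lemma \ref{primeFvector} to see that all terms of the reduced Euler characteristic except $f_{-1}=1$ are divisible by $n$, so $\widetilde{\chi}(\Delta(G))\equiv -1 \pmod n$ is nonzero. Your write-up is in fact a bit more careful with the bookkeeping of the $i=-1$ term than the paper's own phrasing.
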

\begin{proof}
  By Remark \ref{remri} it is sufficient to show that $h_d$ is different from $0$. Since 
  \[
   |h_d|=|\sum_{i=0}^{d} (-1)^i f_{i-1}|\neq 0,
  \]
it is sufficient to show that the reduced Euler formula is different from $0$, that is
\[
 \sum_{i=1}^d (-1)^i f_{i-1}\neq 1. 
\]
By Lemma \ref{primeFvector} we obtain
\[
 \sum_{i=1}^d (-1)^i f_{i-1}=n\sum_{i=1}^d (-1)^i f_{i-1}'
\]
since $n$  is prime and the assertion follows.  
\end{proof}

\begin{Remark}
 In the proof of Theorem \ref{riprime} we are giving a partial positive answer to the Conjecture 5.38 of \cite{Ho} that states that for all circulant graphs 
 $\widetilde{\chi}(\Delta)\neq 0$. In the article \cite{RR} we found other families of circulant graphs satisfying the previous property. In the same article we found a counterexample that disprove the conjecture in general.
\end{Remark}

\begin{Corollary}\label{RegEqualDepth}
 Let $G$ be a circulant graph on $S$ with $n$ prime that is Cohen-Macaulay. Then $\reg R/I(G)=\depth R/I(G)$.
\end{Corollary}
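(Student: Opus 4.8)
The plan is to route everything through a single numerical invariant, the degree $\deg h(t)$ of the $h$-polynomial appearing in the reduced Hilbert series $h(t)/(1-t)^d$, where $d=\dim R/I(G)$. Since $G$ is Cohen--Macaulay we have $\depth R/I(G)=\dim R/I(G)=d$, so the statement reduces to proving $\reg R/I(G)=d$.

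First I would invoke the classical description of regularity for a Cohen--Macaulay standard graded algebra $A=R/I(G)$: writing its Hilbert series in lowest terms as $h(t)/(1-t)^d$ (with $h(1)\neq 0$), one has $\reg A=\deg h(t)$. To justify this I would choose a linear system of parameters $x_1,\ldots,x_d$, which is a regular sequence by the Cohen--Macaulay hypothesis, and pass to the Artinian quotient $\bar A=A/(x_1,\ldots,x_d)A$. Quotienting by each degree-$1$ regular element multiplies the Hilbert series by $(1-t)$, so $\bar A$ has Hilbert series exactly $h(t)$; its regularity is its top nonvanishing degree $\deg h(t)$, and regularity is unchanged under quotient by a linear regular sequence, which gives $\reg A=\deg h(t)$.

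Next I would determine $\deg h(t)$ from the regularity index already computed. The argument inside the proof of Remark \ref{remri} records the identity $\ri(R/I(G))=\max(0,\deg h(t)-d+1)$. By Theorem \ref{riprime} the primality of $n$ forces $\ri(R/I(G))=1$, hence $\deg h(t)-d+1=1$ and thus $\deg h(t)=d$. Combining this with the previous step yields $\reg R/I(G)=\deg h(t)=d=\depth R/I(G)$, as claimed.

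The substitution is immediate once the two ingredients are assembled, so the only point requiring care is the identity $\reg A=\deg h(t)$ for Cohen--Macaulay algebras; I would state the degree-$1$ regular sequence reduction cleanly, since that is precisely where the Cohen--Macaulay hypothesis is genuinely used (beyond the equality $\depth=\dim$). Everything else is a direct consequence of the value $\ri(R/I(G))=1$ established in Theorem \ref{riprime}.
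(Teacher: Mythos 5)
Your proof is correct and takes essentially the same route as the paper: the paper's entire proof is to cite Corollary 4.8 of \cite{Ei} (which for a Cohen--Macaulay module gives $\reg = \ri + \dim - 1$) together with $\ri(R/I(G))=1$ from Theorem \ref{riprime}, whereas you additionally supply a self-contained proof of that cited identity via the equality $\reg A=\deg h(t)$ and Artinian reduction by a linear regular sequence. The only point worth flagging in your write-up is that a \emph{linear} system of parameters need not exist when $K$ is finite, so one should first pass to an infinite extension of $K$, which changes none of the invariants involved.
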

\begin{proof}
 By Corollary 4.8 of \cite{Ei} since $\ri(R/I)=1$ the assertion follows.
\end{proof}

\section{Sequentially $S_2$ circulant graphs of prime order and connectedness}
In this section we study good properties of the independent complex $\Delta(G)$ of a circulant graph $G$ that have prime order. We start by the following
\begin{Definition}
Let $\Delta$ be a simplicial complex then we define the pure simplicial complexes $\Delta^{[k]}$  whose facets are
\[
 \FF(\Delta^{[k]})=\{F\in\Delta:\dim(F)=k\},\hspace{1cm} 0\leq k\leq \dim(\Delta). 
\]
\end{Definition}

One interesting property of Cohen-Macaulay ring $R/I_\Delta$ is that the each simplicial complex $\Delta^{[k]}$ is connected. Hence the following Lemma is of interest.
\begin{Lemma}\label{skeleton}
Let $G$ be a circulant graph on $S$ with $n$ prime. Then the $k$-skeleton of the simplicial complex $\Delta$, $\Delta^{[k]}$ is connected for every $k\geq 1$.
\end{Lemma}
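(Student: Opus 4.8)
The plan is to prove connectedness of each skeleton $\Delta^{[k]}$ ($k\geq 1$) by exploiting the circulant (cyclic) symmetry of $\Delta=\Delta(G)$ together with the primality of $n$. The key structural fact is that $\ZZ_n$ acts on $\Delta$ by the shift $\sigma\colon i\mapsto i+1 \pmod n$, and this action permutes the faces of each fixed dimension. I would phrase connectedness in terms of the $1$-skeleton of the (abstract) graph whose vertices are the $k$-dimensional faces of $\Delta$, with two such faces joined when they share a $(k-1)$-dimensional face; $\Delta^{[k]}$ is connected precisely when this ``face adjacency graph'' is connected.

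First I would fix a single $k$-face $F$ and consider its orbit $\{\sigma^j(F) : j\in\ZZ_n\}$ under the cyclic shift. The first step is to show that any face lies in the same connected component as some shift of $F$; since $\ZZ_n$ acts transitively on the vertices (and, by Lemma \ref{primeFvector}, the faces of each dimension are evenly distributed over the vertices when $n$ is prime), it suffices to connect the translates $\sigma^j(F)$ to one another. Concretely, I would first argue that the component containing $F$ is a union of shift-orbits, and then that we may pass from $F$ to a translate $\sigma^j(F)$ by a path in the adjacency graph. The primality of $n$ enters here exactly as in Lemma \ref{primeFvector}: because $\gcd(j,n)=1$ for every $j\neq 0$, a single nonzero shift generates all of $\ZZ_n$, so it is enough to connect $F$ to one nontrivial translate $\sigma(F)$ (or to any $\sigma^j(F)$ with $j\neq 0$), and then iterate.

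The heart of the argument, and the step I expect to be the main obstacle, is producing an explicit adjacency path between $F$ and a chosen translate within a \emph{single} fixed dimension $k$ — that is, staying among $k$-dimensional faces while sliding the vertex set around the cycle. For the full simplicial complex $\Delta$ this is immediate since independent sets in a circulant graph can be translated freely, but the delicate point is that $\Delta^{[k]}$ only retains the $k$-faces, so each intermediate face along the path must itself have dimension exactly $k$ and must differ from its neighbor in a single vertex. I would handle this by swapping out one vertex at a time: replace the smallest-index vertex of $F$ by an adjacent slot, checking at each step that independence is preserved (this uses only that $S$ is a fixed symmetric connection set, so shifting a single coordinate of an independent set by a value that avoids the forbidden differences again yields an independent set). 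The constraint $k\geq 1$ is essential here, since a $k$-face with $k\geq 1$ has at least two vertices, giving the room needed to perform such single-vertex exchanges while remaining a genuine $k$-dimensional face.

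Finally, I would assemble these pieces: transitivity of the $\ZZ_n$-action reduces connectedness to connecting $F$ with its shifts, primality reduces this to connecting $F$ with a single generator $\sigma(F)$, and the single-vertex exchange argument supplies the required path in $\Delta^{[k]}$. Combining with Lemma \ref{primeFvector}, which guarantees that every vertex supports the same number of $k$-faces and hence that no vertex is isolated in any $\Delta^{[k]}$, yields that $\Delta^{[k]}$ is connected for all $k\geq 1$.
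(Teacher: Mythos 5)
There is a genuine gap — in fact several. First, you have changed the statement being proved: you declare that $\Delta^{[k]}$ is connected ``precisely when'' the graph whose nodes are the $k$-faces, joined when they share a $(k-1)$-face, is connected. That is the notion of \emph{strong} (ridge-) connectedness, which is strictly stronger than connectedness of a simplicial complex (which means $\widetilde{H}_0=0$, equivalently that the $1$-skeleton is a connected graph): two $k$-faces meeting in a single vertex form a connected but not ridge-connected complex. So you are aiming at a harder target than the lemma requires, and one that is not established (and not obviously true) for these complexes. Second, even granting that target, the reduction ``it suffices to connect the translates $\sigma^j(F)$ to one another'' is unjustified: the shift action of $\ZZ_n$ on the set of $k$-faces can have many orbits, and connecting $F$ to its own translates says nothing about $k$-faces in other orbits; likewise ``the component containing $F$ is a union of shift-orbits'' is not true a priori (the shift only permutes the components). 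Third, the step you yourself flag as the heart of the matter — the single-vertex exchange — is never actually carried out, and the justification offered is false as stated: replacing one vertex of an independent set by a nearby vertex requires the new vertex to be non-adjacent to \emph{every} remaining vertex of the face and distinct from them. Already for $k=1$ and $F=\{0,a\}$, passing to $\{1,a\}$ needs $|a-1|_n\notin S$, which need not hold; nothing in your argument produces a valid chain of exchanges.

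The paper's proof avoids all of this by working only with the $1$-skeleton, which is all that connectedness requires. Take any $k$-face and translate it so that it contains $0$ and some other vertex $s$; then $\{0,s\}$ is a $1$-face of $\Delta^{[k]}$, and the translates $F_j=F_0+js$ are again $k$-faces, each containing the edge $\{js,(j+1)s\}$. Since $n$ is prime, $\gcd(s,n)=1$, so the vertices $0,s,2s,\dots,(n-1)s \bmod n$ exhaust $V$, and these edges form a Hamiltonian cycle in the $1$-skeleton of $\Delta^{[k]}$; hence $\Delta^{[k]}$ is connected. Note that primality is used exactly once, to guarantee that a single translation step $s$ generates all of $\ZZ_n$ — no orbit analysis on $k$-faces and no vertex exchanges are needed. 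If you want to salvage your approach, you would either have to prove strong connectedness directly (supplying both the cross-orbit argument and a correct exchange lemma), or, much more simply, drop down to the $1$-skeleton as the paper does.
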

\begin{proof}
To prove the claim we find a Hamiltonian cycle connecting all the vertices in $V=\{0,\ldots,n-1\}$ of the $1$-skeleton of $\Delta^{[k]}$. Then it follows that since the $1$-skeleton is connected then $\Delta^{[k]}$ is connected, too.

We assume without loss of generality that $F_0=\{v_0, v_1,\ldots, v_{k}\}\in \Delta^{[k]}$ such that $v_0=0$, $v_1=s\in S$. We define the set
\[
F_j=\{v_{0,j}, v_{1,j},\ldots, v_{k,j}\}
\]
with $v_{i,j}=v_i+js \mod n$. It is easy to observe that since $F_0$ is in $\Delta^{[k]}$  and $G$ is circulant, $F_j$ is in $\Delta^{[k]}$, too.

Moreover if we focus on the first two vertices of $F_j$ we obtain that
\[
 v_{1,j}=v_{0,j-1}\mbox{ for all }j=1,\ldots,n-1,
\]
and $v_{0,n-1}=v_{1,0}$. Since
\[
 v_{0,j}=js \mod n
\]
the set $\{v_0,\ldots, v_{n-1}\}$, by the primality of $n$, is equal to $V$. Hence the cycle with vertices
\[
 v_{0,0},v_{0,1},\ldots, v_{0,n-1}
\]
and edges
\[
 \{v_{0,0},v_{0,1}\},\ldots, \{v_{0,n-2}, v_{0,n-1}\},\{v_{0,n-1}, v_{0,0}\}
\]
is a Hamiltonian cycle and the assertion follows.
\end{proof}

Recall that a finitely generated graded module $M$ over a Noetherian graded $K$-algebra $R$ is said to satisfy the Serre's condition $S_r$ if
\[
 \depth M_\mathfrak{p}\geq \min(r,\dim M_ \mathfrak{p}),
\]
for all $\mathfrak{p}\in \Spec(R)$.

\begin{Definition}
Let $M$ be a finitely generated $\ZZ$-graded module over a standard graded $K$-algebra $R$ where $K$ is a field. For a positive integer $r$ we say that $M$ is {\em sequentially $S_r$} if there exists a finite filtration of graded $R$-modules
\[
 0=M_0\subset M_1\subset \cdots\subset M_r=M
\]
such that each $M_i/M_{i-1}$ satisfies th $S_r$ condition and the Krull dimensions of the quotients are increasing:
\[
 \dim(M_1/M_0)<\dim(M_2/M_1)< \cdots<\dim(M_{t}/M_{t-1}).
\]
\end{Definition}

A nice characterization of sequentially $S_2$ simplicial complexes is the following:

\begin{Theorem}[\cite{HTYZ}]\label{th:terai}
 Let $\Delta$ be a simplicial complex with vertex set $V$. Then $\Delta$ is sequentially $S_2$ if and only if the following conditions hold:
 \begin{enumerate}
  \item $\Delta^{[i]}$ is connected for all $i\geq 1$;
  \item $\link_\Delta(x)$ is sequentially $S_2$ for all $x\in V$.
 \end{enumerate}
\end{Theorem}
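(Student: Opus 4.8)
The plan is to reduce the sequentially $S_2$ property to a Serre-type condition on each pure skeleton and then to translate that condition into the two stated combinatorial requirements. First I would invoke the $S_2$-analogue of Duval's characterization of sequential Cohen--Macaulayness: a simplicial complex $\Delta$ is sequentially $S_2$ if and only if every pure skeleton $\Delta^{[i]}$, for $0\le i\le\dim\Delta$, satisfies Serre's condition $S_2$. This holds because the quotients of the dimension filtration of $K[\Delta]$ are, in the relevant homological degrees, governed by the Stanley--Reisner rings of the $\Delta^{[i]}$, so that the increasing-dimension filtration demanded in the definition of sequentially $S_2$ exists precisely when each such skeleton is $S_2$.

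Next I would use the homological description of Serre's condition $S_2$ for a pure complex: for $\Gamma$ pure, $K[\Gamma]$ is $S_2$ if and only if $\link_\Gamma(F)$ is connected for every face $F\in\Gamma$ (the empty face included) with $\dim\link_\Gamma(F)\ge 1$; equivalently $\widetilde H_0(\link_\Gamma(F);K)=0$ in that range. The bridge between the two steps is the identity
\[
\link_{\Delta^{[i]}}(F)=\bigl(\link_\Delta(F)\bigr)^{[\,i-|F|\,]},
\]
which holds because a face $G$ disjoint from $F$ lies in $\link_{\Delta^{[i]}}(F)$ exactly when $F\cup G$ is contained in an $i$-face of $\Delta$, i.e. when $G$ is contained in an $(i-|F|)$-face of $\link_\Delta(F)$.

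With these in hand I would split the quantifier over faces $F$. Taking $F=\emptyset$ and letting $i$ range over $i\ge 1$ gives exactly ``$\Delta^{[i]}$ is connected for all $i\ge 1$'', which is condition (1). For a nonempty face $F$ I would fix a vertex $x\in F$, write $F=\{x\}\cup F'$, and use $\link_\Delta(\{x\}\cup F')=\link_{\link_\Delta(x)}(F')$ together with the displayed skeleton identity to rewrite every connectedness requirement coming from faces containing $x$ as the requirement that all pure skeletons of all links of faces of $\link_\Delta(x)$ be connected. By the criterion of the first two steps applied now to $\link_\Delta(x)$, this says precisely that $\link_\Delta(x)$ is sequentially $S_2$, which, ranging over $x\in V$, is condition (2). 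Matching the two families of conditions in both directions yields the claimed equivalence.

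The main obstacle I expect is the first step: making the reduction to the pure skeletons rigorous requires the local-cohomology characterization of $S_r$ (via Schenzel's formula and the vanishing of the lower local cohomology modules of the dimension-filtration quotients), rather than a purely combinatorial argument, and one must check that the dimensions of these quotients are strictly increasing so that a genuine sequentially $S_2$ filtration is produced. A secondary, bookkeeping difficulty is the careful treatment of the degenerate cases---faces $F$ with $\dim\link_\Delta(F)=0$, which impose no condition, and the distinction between $i=0$ and $i\ge1$---so as to be sure that the empty-face instances account for (1) and no others, and that the conjunction of the nonempty-face conditions matches the conjunction over $x\in V$ even though a face containing several vertices is seen from each of them.
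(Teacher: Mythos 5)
The paper gives no proof of this statement: it is imported verbatim from \cite{HTYZ}, so there is no internal argument to compare yours against. That said, your outline essentially reconstructs the proof in the cited source. The three ingredients you use are the right ones: the reduction of sequentially $S_2$ to the condition that every pure skeleton $\Delta^{[i]}$ satisfies $S_2$ (the $S_r$-analogue of Duval's theorem, which is one of the main results of \cite{HTYZ}); the connected-links criterion for $S_2$ of a \emph{pure} complex; and the compatibilities $\link_{\Delta^{[i]}}(F)=\bigl(\link_\Delta(F)\bigr)^{[i-|F|]}$ and $\link_\Delta(\{x\}\cup F')=\link_{\link_\Delta(x)}(F')$, both of which you state correctly and which let you sort the connectedness requirements into the empty-face instances (condition (1)) and, for each vertex $x$, the instances coming from faces containing $x$ (condition (2)). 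The final bookkeeping is sound, including the observation that a face with several vertices is counted from each of them, which is harmless for a conjunction of conditions, and that as $i$ ranges over all skeleton dimensions of $\Delta$ one does obtain every skeleton of $\link_\Delta(x)$. The one substantive caveat is that all of the homological content lives in your first two black boxes --- the skeleton characterization of sequentially $S_2$ and the link-connectedness characterization of $S_2$ for pure complexes --- which you assert rather than prove; both are genuine theorems requiring the local-cohomology analysis you allude to (and the purity of each $\Delta^{[i]}$, automatic by construction, is needed for the second to apply). So what you have is a correct and faithful reduction to two known results rather than a self-contained proof, which is an entirely reasonable way to treat a statement the paper itself only cites.
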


\begin{Example}\label{ex:O_G}
Let $G$ be the circulant graph $C_6(\{1\})$. Then its simplicial complex $\Delta$ is connected, but  $\Delta^{[2]}$ is not (see Figure \ref{C6}).
\begin{center}
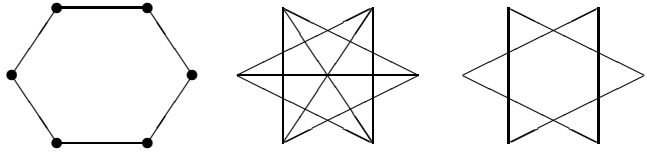
\begin{figure}

\setlength{\unitlength}{.3cm}
\begin{picture}(28,8)

% C_6 
\put(02,00){\circle*{.5}}
\put(06,00){\circle*{.5}}

\put(00,03){\circle*{.5}}
\put(08,03){\circle*{.5}}

\put(02,06){\circle*{.5}}
\put(06,06){\circle*{.5}}

\put(02,00){\line(1,0){4}}
\put(02,06){\line(1,0){4}}

\put(00,03){\line(2,-3){2}}
\put(00,03){\line(2,3){2}}

\put(08,03){\line(-2,-3){2}}
\put(08,03){\line(-2,3){2}}

% Dual C_6 
%\put(12,00){\circle*{.2}}
%\put(16,00){\circle*{.2}}

%\put(10,03){\circle*{.2}}
%\put(18,03){\circle*{.2}}

%\put(12,06){\circle*{.2}}
%\put(16,06){\circle*{.2}}

\put(12,00){\line(2,3){4}}
\put(12,06){\line(2,-3){4}}

\put(10,03){\line(2,1){6}}
\put(10,03){\line(1,0){8}}
\put(10,03){\line(2,-1){6}}

\put(18,03){\line(-2,-1){6}}
\put(18,03){\line(-2,1){6}}

\put(12,00){\line(0,1){6}}
\put(16,00){\line(0,1){6}}

% Skel 2 Dual C_6 
%\put(22,00){\circle*{.2}}
%\put(26,00){\circle*{.2}}

%\put(20,03){\circle*{.2}}
%\put(28,03){\circle*{.2}}

%\put(22,06){\circle*{.2}}
%\put(26,06){\circle*{.2}}

\put(20,03){\line(2,1){6}}
\put(20,03){\line(2,-1){6}}

\put(28,03){\line(-2,-1){6}}
\put(28,03){\line(-2,1){6}}

\put(22,00){\line(0,1){6}}
\put(26,00){\line(0,1){6}}
\end{picture}
\caption{$G=C_6(\{1\}$, $\Delta$ and $\Delta^{[2]}$.}\label{C6}  
\end{figure}
 
\end{center}

\end{Example}

Sequentially Cohen-Macaulay cycles have been characterized in \cite{FT}, that are in our notation are just $C_3(\{1\})$ and $C_5(\{1\})$. In \cite{HTYZ} the authors proved that the only  sequentially $S_2$ are the odd cycles. The following is related to these results.
\begin{Theorem}
 Let $G$ be the circulant graph $C_n(\{1,\ldots,s\})$ with $n$ prime. Then $G$ is sequentially $S_2$.
\end{Theorem}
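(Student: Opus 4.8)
The plan is to verify, for $\Delta=\Delta(G)$, the two conditions of the characterization in Theorem~\ref{th:terai}, and then to run the recursion it sets up as an induction on the number of vertices. Condition (1), that $\Delta^{[i]}$ is connected for all $i\geq 1$, is supplied directly by Lemma~\ref{skeleton}, which exploits the primality of $n$ in an essential way; indeed Example~\ref{ex:O_G} shows that this connectivity genuinely fails for $n=6$, so primality is not a technical convenience here but the whole point. It remains to establish condition (2): that $\link_\Delta(x)$ is sequentially $S_2$ for every vertex $x$.

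Since $G$ is circulant, hence vertex-transitive, it suffices to treat $x=0$. The first step is to identify this link concretely. A face $F$ lies in $\link_\Delta(0)$ iff $F\cup\{0\}$ is independent, i.e.\ iff $F$ is an independent set contained in $V\setminus N[0]$. For $G=C_n(\{1,\ldots,s\})$ one has $N[0]=\{0,1,\ldots,s\}\cup\{n-s,\ldots,n-1\}$, so $V\setminus N[0]=\{s+1,\ldots,n-s-1\}$ is a window of $n-2s-1$ consecutive vertices. Because this window is narrower than $n$, no wrap-around edges occur inside it, and two of its vertices $i,j$ are adjacent exactly when $|i-j|\in\{1,\ldots,s\}$; hence $\link_\Delta(0)=\Delta(P)$, where $P$ is the $s$-th power of a path on $n-2s-1$ vertices.

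This reduces the theorem to the separate statement that the independent complex of every such path power is sequentially $S_2$. I would prove the slightly more general claim that $\Delta(H)$ is sequentially $S_2$ for every finite disjoint union $H$ of $s$-th path powers, by induction on the number of vertices, again through Theorem~\ref{th:terai}. The key structural observation is that this class is closed under taking links: deleting the closed neighborhood of a vertex splits the component containing it into two shorter path powers and leaves the remaining components untouched, so every $\link_{\Delta(H)}(x)$ is the independent complex of a smaller member of the class. Thus condition (2) in the inductive step is covered by the induction hypothesis, while the base cases (complete graphs, whose independent complex is $0$-dimensional) are immediate.

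The main obstacle is condition (1) for these link complexes, since Lemma~\ref{skeleton} is stated only for prime circulants and does not apply to path powers. For a single path power I would show that each $\Delta^{[k]}$ is connected by a sliding argument in the spirit of Lemma~\ref{skeleton}: any independent set of size $k+1$ can be carried to any other by moving its vertices one step at a time, with every intermediate set an independent $(k+1)$-set, which connects the $1$-skeleton of $\Delta^{[k]}$. For a disconnected union one uses that $\Delta(H_1\sqcup H_2)=\Delta(H_1)*\Delta(H_2)$ is a join, together with the elementary fact that for nonempty $\Delta_1,\Delta_2$ the skeleton $(\Delta_1*\Delta_2)^{[k]}$ is connected whenever it is nonempty and $k\geq 1$, by routing any two vertices through a common vertex of the opposite factor. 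Granting these two connectivity facts, Theorem~\ref{th:terai} closes the induction, and applying it once more to $\Delta(G)$ itself (condition (1) by Lemma~\ref{skeleton}, condition (2) by the path-power case just established) yields the result.
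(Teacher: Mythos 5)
Your proposal is correct in outline and agrees with the paper up to and including the identification of the link: both arguments get condition (1) of Theorem~\ref{th:terai} from Lemma~\ref{skeleton} and both compute $\link_\Delta(0)$ as the independence complex of the induced subgraph on the window $\{s+1,\ldots,n-s-1\}$, i.e.\ of the $s$-th power of a path. Where you diverge is in how you certify that this link is sequentially $S_2$. The paper observes that the natural left-to-right ordering of the path power is a perfect elimination ordering, so the graph is chordal, and then invokes Theorem~3.2 of \cite{FT} to conclude that its edge ideal is sequentially Cohen--Macaulay (hence sequentially $S_2$) in one stroke. You instead stay entirely inside the framework of Theorem~\ref{th:terai} and run an induction over the class of disjoint unions of path powers, which is closed under taking links; this is more self-contained and arguably more informative (it makes the connectivity of every skeleton of every iterated link explicit), but it is also longer and leaves two auxiliary facts as sketches that would need genuine proofs: the sliding argument showing that the facet graph of $\Delta(P)^{[k]}$ is connected (this one is routine --- push each independent $(k+1)$-set one element at a time to the leftmost packed configuration $\{1,s+2,\ldots,k(s+1)+1\}$), and the claim that $(\Delta_1*\Delta_2)^{[k]}$ is connected for $k\geq 1$ whenever it is nonempty and both factors have a vertex. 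The latter is true but not immediate --- two vertices in opposite factors need not lie in a common $k$-face when the faces through them are lopsided, so ``routing through a common vertex of the opposite factor'' has to be replaced by a short chain of $k$-faces sharing vertices --- and it is the one step of your argument I would insist you write out in full. Note also that the paper's route proves strictly more about the link (sequential Cohen--Macaulayness rather than just sequential $S_2$), which you give up by avoiding \cite{FT}.
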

\begin{proof}
By Lemma \ref{skeleton} the first condition of Theorem \ref{th:terai} is satisfied. To check the second condition of Theorem \ref{th:terai} we prove that $K[\link_\Delta(x_0)]$, is sequentially Cohen Macaulay. We observe that
\[
K[\link_\Delta(x_0)]\cong (R/I(G))_{x_0}\cong  K[x_0^{\pm 1}][x_1,\ldots,x_{n-1}]/I(G)'
\]
where $I(G)'$ is obtained by the $K$-algebra homomorphism induced by the mapping $x_0\to 1$. Since the vertices  adjacent to  $0$ are $\{1,\ldots,s\}\cup\{n-s,\ldots,n-1\}$ we have that
\[
 I(G)'=I(G')+(x_1,\ldots,x_s)+(x_{n_s},\ldots,x_{n-1}).
\]
with $G'$ be the subgraph of $G$ induced by the vertices $\{s+1,\ldots,n-(s+1)\}$. That is
\[
(R/I(G))_{x_0}\cong K[x_{s+1},\ldots,x_{n-s-1}]/I(G').
\]
We claim that $I(G')$ is chordal, hence it is sequentially Cohen-Macaulay by Theorem 3.2 of \cite{FT}. To prove the claim we observe that the labelling on the vertices of $G'$
\[
 s+1,s+2,\ldots, n-s-1
\]
induces a perfect elimination ordering, that is $N^+(i)=\{j: \{i,j\}\in E(G'), i<j \}$ is a clique. Let $j$, $k\in N^+(i)$. That is $\{i,j\}$ and $\{i,k\}$ are two edges with $i<j$ and $i<k$ and assume $j<k$. Then $|j-i|_n=j-i\leq s$ and $|k-i|_n=k-i\leq s$. Moreover
\[
 0<j-i<k-i\leq s.
\]
Hence it follows $|k-j|=k-j<s$. Therefore $\{j,k\}\in E(G')$ and  $N^+(i)$ is a clique.
\end{proof}

\begin{Example}
If a ring is Cohen-Macaulay it is pure and sequentially $S_n$ for all $n$. The circulant graph of prime order with minimum number of vertices that is Cohen-Macaulay and has Krull dimension greater than $2$ is $C_{13}(\{1,5\})$ (see \cite{EMT}). 
\end{Example}

\section{Cohen-Macaulay circulant graphs of dimension $2$ and their Castelnuovo-Mumford regularity}
We start this section by the following
\begin{Theorem}\label{CM1}
 Let $G$ be the circulant graph $C_n(S)$ with $S\subset \{1,\ldots,\left \lfloor\frac{n}{2}\right \rfloor\}$. The following conditions are equivalent:
  \begin{enumerate}
  \item $G$ is Cohen-Macaulay of dimension $2$;
  \item $\Delta(G)$ is connected of dimension $1$;
  \item $\gcd{(n,\bar{S})}=1$ and $\forall a,b\in \bar{S}$ we have  $b-a\notin \bar{S}$ and $n-(b+a)\notin \bar{S}$.
 \end{enumerate}
\end{Theorem}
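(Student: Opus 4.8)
The plan is to prove the two equivalences (1)$\Leftrightarrow$(2) and (2)$\Leftrightarrow$(3) separately, using (2) as the geometric bridge between the homological statement (1) and the purely arithmetic statement (3). The key dictionary throughout is Remark \ref{cliqueindependent}(2): $\Delta(G)$ is the clique complex of $\bar G=C_n(\bar S)$, so the $1$-skeleton of $\Delta(G)$ is exactly the graph $\bar G$, and a clique of size $3$ in $\bar G$ is the same as a face of dimension $2$ in $\Delta(G)$.

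For (1)$\Leftrightarrow$(2): a Cohen--Macaulay complex is pure and, in dimension $\geq 1$, connected, while ``$G$ Cohen--Macaulay of Krull dimension $2$'' means precisely $\dim\Delta(G)=1$; this yields (1)$\Rightarrow$(2). For the converse I would first note that a connected complex of dimension $1$ on at least two vertices has no isolated vertex, hence is pure of dimension $1$; then I invoke Reisner's criterion, for which the only nonvacuous condition on a pure $1$-dimensional complex is $\widetilde{H}_0(\Delta;K)=0$ (the link of each vertex is a nonempty $0$-dimensional complex, automatically Cohen--Macaulay). Since $\widetilde{H}_0(\Delta;K)=0$ is exactly connectedness, a pure connected $1$-dimensional complex is Cohen--Macaulay of Krull dimension $2$, giving (2)$\Rightarrow$(1).

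The substance is (2)$\Leftrightarrow$(3), which I split into the ``connected'' and the ``dimension $1$'' parts. Because the $1$-skeleton of $\Delta(G)$ is $\bar G$, connectedness of $\Delta(G)$ is connectedness of the Cayley graph $\bar G$ on $\ZZ_n$ with connection set $\bar S\cup(-\bar S)$; this set generates the subgroup $d\,\ZZ_n$ with $d=\gcd(\{n\}\cup\bar S)$, so $\bar G$ is connected iff $\gcd(n,\bar S)=1$. This condition also forces $\bar S\neq\emptyset$, hence $\Delta(G)$ has an edge and $\dim\Delta(G)\geq 1$. It then remains to show that $\dim\Delta(G)=1$, i.e. that $\bar G$ is triangle-free, is equivalent to the two arithmetic conditions of (3).

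The hard and most computational part is this triangle analysis. By vertex-transitivity I may take a triangle to be $\{0,p,q\}$; writing $a=|p|_n,\ b=|q|_n\in\bar S$, the third vertices are $p\in\{a,n-a\}$ and $q\in\{b,n-b\}$. Computing $|p-q|_n$ in the four sign combinations collapses to two cases: the same-sign case gives distance $|b-a|$, and the opposite-sign case gives $\min(a+b,\,n-(a+b))$. I expect to verify that $|b-a|\in\bar S$ is exactly the failure of ``$b-a\notin\bar S$''; that $n-(a+b)\in\bar S$ (the subcase $a+b\geq\lceil n/2\rceil$) is exactly the failure of ``$n-(a+b)\notin\bar S$''; and that the remaining subcase $a+b\in\bar S$ with $a+b\leq\lfloor n/2\rfloor$ cannot occur once ``$b-a\notin\bar S$'' holds, since $a,a+b\in\bar S$ would produce the forbidden difference $(a+b)-a=b\in\bar S$. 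Running this in both directions — exhibiting the explicit triangle $\{0,a,b\}$ whenever $b-a\in\bar S$ and $\{0,a,n-b\}$ whenever $n-(a+b)\in\bar S$, and conversely excluding all four combinations when (3) holds — shows $\bar G$ triangle-free $\Leftrightarrow$ the two conditions of (3). Combining this with the connectedness equivalence finishes (2)$\Leftrightarrow$(3), and together with (1)$\Leftrightarrow$(2) this proves the theorem. The only genuine subtlety I anticipate is bookkeeping the reductions $|p-q|_n=\min(a+b,n-(a+b))$ and the degenerate coincidences $p=q$ (which occur only for $n$ even, $a=b=n/2$, and are harmless).
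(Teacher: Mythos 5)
Your proposal is correct and follows essentially the same route as the paper: translate everything through the clique complex of $\bar G$, get the $\gcd$ condition from connectedness of the Cayley graph, and match triangles $\{0,p,q\}$ in $\bar G$ with violations of the two arithmetic conditions. You are in fact more complete than the paper, which cites $(1)\Leftrightarrow(2)$ as a known fact and dismisses $(3)\Rightarrow(2)$ with ``similar arguments,'' whereas you supply the Reisner-criterion argument and the full four-case sign analysis (including the reduction of the subcase $a+b\in\bar S$, $a+b\leq\lfloor n/2\rfloor$ to the difference condition).
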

\begin{proof}
$(1)\Leftrightarrow (2)$. Known fact. See also \cite{Ho} Corollary 4.54.

$(2)\Rightarrow (3)$. If $\Delta(G)$ is connected then there is a path in $\bar{G}\iso \Delta(G)$ connecting the vertices $0$ and $1$ (see Remark \ref{cliqueindependent}) whose vertices are
\[
 0=v_0, v_1, \ldots,v_r=1
\]
and edges
\[
 \{0, s_1\}, \{s_1, s_1+s_2\}, \ldots, \{\sum_{i=1}^{r-1}s_i, \sum_{i=1}^{r}s_i\equiv 1 \mod n\}
\]
with $s_i\in \bar{S}$. Hence there exists a relation
\[
 \sum a_i s_i\equiv 1 \mod n,\mbox{ with }a_i\in\NN, s_i\in \bar{S}.
\]
By the Euclidean algorithm  we have that $\gcd{(n,\bar{S})}=1$.  Suppose there exist $a,b\in \bar{S}$ with $b-a\in \bar{S}$. This implies $a\neq b$. We observe that $\{0,a, b\}$ is a clique in $\Delta(G)$, that is $\dim \Delta(G) \geq 2$.
In fact since $\bar{G}$ is circulant $\{0,a\}$ , $\{0,b\}$  and  $\{a, a+(b-a)=b\}$ are edges in $\bar{G}$. Now suppose that $n-(b+a)\in \bar{S}$.  We observe that $\{0,a, a+b\}$ is a clique in $\Delta(G)$. In fact since $\bar{G}$ is circulant $\{0,a\}$ , $\{a,a+b\}$  and  $\{a+b, a+b + n-(a+b)\equiv 0\}$ are edges in $\bar{G}$.  The implication $(3)\Rightarrow (2)$  follows by similar arguments. 

%Suppose there exist $a,b,c\in \bar{S}$ with $a+b+c=n$. We claim that $\{0,a, a+b\}$ is a clique in $\Delta(G)$, that is $\dim \Delta(G) \geq 2$. In fact since $a,c\in \bar{S}$ then $\{0,a\}$, $\{n-c, (n-c)+c\}=\{0,n-c\}\in \Delta(G)$. It is sufficient to show that $\{a,n-c\}\in \Delta(G)$. The claim follows since $b\in \bar{S}$  and $n-c=a+b$ and the graph is circulant. The other direction follows by the same argument.
\end{proof}
% \begin{Corollary}
% Let $G$ be the circulant graph $C_n(S)$ with $\{1,\ldots, d\}\subseteq S\subset \{1,\ldots,\left \lfloor\frac{n}{2}\right \rfloor\}$ with $n\leq 3d+2$ and $\gcd{(n,\bar{S})}=1$. Then $G$ is Cohen-Macaulay.  
% \end{Corollary}
% \begin{proof}
%  It is sufficient to observe  $\min \bar{S}\geq d+1$ and $3(d+1)\geq n+1$ satisfying condition (3) in Theorem \ref{CM1}.
% \end{proof}

\begin{Theorem}\label{Reg2}
 Let $G$ be a Cohen-Macaulay circulant graph $C_n(S)$ of dimension $2$. Then $\reg R/I(G)=2$.
\end{Theorem}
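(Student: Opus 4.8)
The plan is to deduce $\reg R/I(G)=2$ from the top entry of the $h$-vector of $\Delta(G)$, and the whole argument reduces to showing that this entry $h_2$ is nonzero. By Theorem~\ref{CM1} the hypothesis that $G$ is Cohen--Macaulay of dimension $2$ forces $\Delta(G)$ to be connected of dimension $1$, so $R/I(G)=R/I_{\Delta(G)}$ is Cohen--Macaulay of Krull dimension $d=2$ and $\depth R/I(G)=2$. Once we know $h_2\neq 0$, Remark~\ref{remri} gives $\ri(R/I(G))=1$, and then Corollary~4.8 of~\cite{Ei}—applied exactly as in the proof of Corollary~\ref{RegEqualDepth}, where only $\ri=1$ and the Cohen--Macaulay property (not the primality of $n$) are used—yields $\reg R/I(G)=\depth R/I(G)=2$. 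Equivalently, and self-containedly, for a Cohen--Macaulay standard graded algebra the regularity equals the degree of the $h$-polynomial: reducing modulo a linear system of parameters produces an Artinian quotient whose Hilbert function is the $h$-vector, and this reduction changes neither the regularity nor the $h$-vector; since $d=2$ the $h$-polynomial has degree at most $2$, so $h_2\neq 0$ immediately forces $\reg R/I(G)=2$.

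So everything hinges on the inequality $h_2\neq 0$. Because $d=2$, the $h$-vector is $(h_0,h_1,h_2)=(1,\,n-2,\,h_2)$, and from the defining formula (or from $h_2=(-1)^{d-1}\widetilde{\chi}(\Delta)$) one computes
\[
 h_2=1-f_0+f_1=1-n+|E(\bar G)| ,
\]
which is precisely the cycle rank (first Betti number) of the connected graph $\Delta(G)$, whose underlying graph is $\bar G$. Thus $h_2\geq 0$ always, with $h_2=0$ exactly when $\bar G$ is a tree, and the core of the proof is to rule this out.

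The key structural input I would use is that $\bar G=C_n(\bar S)$ is again a circulant graph by Remark~\ref{cliqueindependent}(1), hence regular. Since $\dim\Delta(G)=1$ we have $\bar S\neq\emptyset$, so $\bar G$ has positive degree, and since $\bar G$ is connected on $n\geq 3$ vertices its common degree $r$ cannot equal $1$ (a connected $1$-regular graph is a single edge, forcing $n=2$; for $n$ even with $\bar S=\{n/2\}$ one would get a disconnected perfect matching). Hence $r\geq 2$, so $|E(\bar G)|=\tfrac{nr}{2}\geq n$ and therefore $h_2=|E(\bar G)|-n+1\geq 1$. Combined with the first paragraph this gives $\deg h(t)=2$ and $\reg R/I(G)=2$.

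The main obstacle is the last step: one must be careful that the combinatorial hypotheses genuinely exclude every degenerate configuration. Concretely, ``$\bar G$ connected of dimension $1$'' must be shown to force $r\geq 2$ (the troublesome borderline being $n$ even with $\bar S=\{n/2\}$, discarded by connectedness), and the very small cases $n\leq 2$—where a connected $1$-dimensional complex is necessarily a single edge, i.e. a tree, and $R/I(G)$ reduces to a polynomial ring with $\reg=0$—lie outside the intended range and should be noted as such. Granting $n\geq 3$, the regularity of the circulant complement graph $\bar G$ carries the argument.
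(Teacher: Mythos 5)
Your proposal is correct and follows the same overall skeleton as the paper: both reduce the statement to showing $h_2\neq 0$, and then invoke Remark \ref{remri} together with Corollary 4.8 of \cite{Ei} (as in Corollary \ref{RegEqualDepth}, where indeed only $\ri=1$ and Cohen--Macaulayness are used, not primality of $n$) to conclude $\reg R/I(G)=\depth R/I(G)=2$. Where you diverge is in the verification that $h_2=f_1-f_0+1\neq 0$. The paper substitutes the explicit edge count of $\bar{G}$ from Remark \ref{cliqueindependent}(3), obtaining $h_2=\binom{n}{2}-n(s+1)+1$ (resp.\ the variant with $s+\tfrac12$), and checks that the resulting quadratic in $n$ has no admissible integer roots. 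You instead observe that $h_2$ is the cycle rank of the connected graph $\bar{G}$, which is a nonempty circulant and hence regular of some degree $r$; connectedness on $n\geq 3$ vertices rules out $r=1$, so $r\geq 2$, $|E(\bar{G})|\geq n$, and $h_2\geq 1$. Your argument is slightly cleaner in that it avoids the case split on whether $\tfrac{n}{2}\in S$ and makes transparent why $h_2$ is nonnegative in the first place, while the paper's computation has the side benefit of producing the exact value of $h_2$, which is then reused in Theorem \ref{CMtype}. Your caveats (that $\bar{S}\neq\emptyset$ because $\dim\Delta(G)=1$, and that $n\leq 2$ is excluded) are correctly identified and correctly resolved; there is no gap.
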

\begin{proof}
It is sufficient to prove  that $h_2\neq 0$ (see Remark \ref{remri} and the proof of Corollary \ref{RegEqualDepth}).
We need to compute $h_2=f_1-f_0+f_{-1}$. We observe that $f_1$ is the number of edges of $\bar{G}$. By Remark \ref{cliqueindependent} one of the two cases to study is
 \[
  \binom{n}{2}-ns,
 \]
with $h_2=\binom{n}{2}-n(s+1)+1$. The only roots $n\in \NN$ of the quadratic equation
\[
 \binom{n}{2}-n(s+1)+1=0
\]
are $1$ and $2$ with $s=0$. Absurd. The other case follows by the same argument.
\end{proof}

\begin{Theorem}\label{CMtype}
 Let $G$ be a Cohen-Macaulay circulant graph $C_n(S)$ of dimension $2$. Then its Cohen-Macaulay type is 
 \[ 
h_2=
\left \{
  \begin{tabular}{ll}
  $\binom{n}{2}-n(s+\frac{1}{2})+1$ & if $n$ is even and $\frac{n}{2}\in S$\\
  $\binom{n}{2}-n(s+1)+1$ & otherwise.  
  \end{tabular}
\right. 
\]
\end{Theorem}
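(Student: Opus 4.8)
The plan is to identify the Cohen-Macaulay type with a single reduced Betti number and then to show, via Hochster's formula, that the only potentially competing contribution vanishes precisely because $\bar G$ is vertex-transitive. First I would record that, since $R/I(G)$ is Cohen-Macaulay of Krull dimension $2$, the Auslander--Buchsbaum formula gives $\depth R/I(G)=2$ and hence $\pd R/I(G)=n-2$; therefore the type is the total Betti number $\beta_{n-2}=\sum_j\beta_{n-2,j}$. By Theorem \ref{CM1} and Remark \ref{cliqueindependent} the complex $\Delta(G)$ is the $1$-dimensional clique complex of the connected, triangle-free graph $\bar G$, so $\Delta(G)$ and each of its restrictions $\Delta(G)|_W$ are literally graphs (induced subgraphs of $\bar G$). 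I would then apply Hochster's formula $\beta_{n-2,j}=\sum_{|W|=j}\dim_K\widetilde{H}_{j-n+1}(\bar G|_W)$ and observe that for $j\le n-2$ the homological index is $\le -1$ while the restriction is never the void complex, so those terms vanish; only $|W|=n$ and $|W|=n-1$ survive.

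Next I would evaluate the two surviving terms. For $|W|=n$ the contribution is $\dim_K\widetilde{H}_1(\bar G)$, the dimension of the cycle space of the connected graph $\bar G$, which equals $f_1-f_0+1=h_2$ (with $f_0=n$ and $f_1=|E(\bar G)|$). For $|W|=n-1$, writing $W=V\setminus\{v\}$, the contribution is $\dim_K\widetilde{H}_0(\bar G-v)$, i.e. one less than the number $c(\bar G-v)$ of connected components of $\bar G-v$; summing over $v$ gives $\sum_{v}\bigl(c(\bar G-v)-1\bigr)$, the total ``cut-vertex defect'' of $\bar G$. Thus the type equals $h_2+\sum_{v}\bigl(c(\bar G-v)-1\bigr)$.

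The crux, and the step I expect to be the only real obstacle, is to prove that this second sum is zero, i.e. that $\bar G$ has no cut vertex. This cannot follow from connectedness alone -- a path already exhibits a connected $1$-dimensional Cohen-Macaulay complex with type strictly larger than $h_2$ -- so the circulant hypothesis must be used here. By Remark \ref{cliqueindependent}(1), $\bar G=C_n(\bar S)$ is again circulant, hence vertex-transitive. If $\bar G$ had a cut vertex, vertex-transitivity would make every vertex a cut vertex; but a finite connected graph on at least two vertices always has at least two non-cut vertices (for instance the two endpoints of a longest path), a contradiction. Hence $c(\bar G-v)=1$ for all $v$, the defect sum vanishes, and the type equals $h_2$.

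Finally, to reach the displayed closed form I would compute $h_2=f_1-f_0+1$ exactly as in the proof of Theorem \ref{Reg2}, with $f_0=n$ and $f_1=|E(\bar G)|$ read off from Remark \ref{cliqueindependent}(3) applied to $\bar G=C_n(\bar S)$, where $|\bar S|=\lfloor n/2\rfloor-s$. Splitting according to whether $n/2\in\bar S$ (equivalently whether $n$ is even with $n/2\notin S$) reproduces the two cases: when $n$ is even and $n/2\in S$ the count $f_1=n(n/2-s)$ yields $\binom{n}{2}-n(s+\tfrac12)+1$, while in the remaining sub-cases the $n/2$-corrections cancel and one lands on $\binom{n}{2}-n(s+1)+1$, as claimed.
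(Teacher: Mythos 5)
Your proposal is correct and follows essentially the same route as the paper: Auslander--Buchsbaum to reduce the type to $\beta_{n-2}$, Hochster's formula to isolate the degree-$n$ contribution $\dim_K\widetilde{H}_1(\bar G)=f_1-f_0+1=h_2$, vanishing of the degree-$(n-1)$ contributions because $\bar G$ minus a vertex stays connected, and the edge count from Remark \ref{cliqueindependent}. The only difference is that where the paper simply asserts that connected circulant graphs are biconnected, you actually prove it via vertex-transitivity and the existence of two non-cut vertices, which is a welcome strengthening of the same step.
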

\begin{proof}
 By Auslander-Buchsbaum Theorem (Theorem 1.3.3, \cite{BH2}) and since the $\depth R/I(G)=2$ we need to compute the Betti number in position $\beta_{i,j}$ when $i=n-2$. By Theorem \ref{Reg2} and the definition of Castelnuovo-Mumford regularity, the Betti numbers that are not trivially $0$ are $\beta_{n-2,j}$ in the degrees $j\in \{n-1,n\}$. We recall the Hochster's formula (see \cite{MS}, Corollary 5.1.2)
 \[
  \beta_{i,\sigma}(R/I_\Delta)=\dim_K \widetilde{H}_{|\sigma|-i-1}(\Delta_{\mid \sigma};K)
 \]
 where $\widetilde{H}(\cdot)$ is the simplicial homology and $\sigma\in \Delta$ is interpreted as squarefree degree in the minimal free resolution and it induces a restriction in $\Delta$ defined by
 \[
  \Delta_{\mid \sigma}=\{F\in \Delta: F\subseteq \sigma\}.
 \]
We observe that in the squarefree degree $\sigma$ having total degree $n-1$
\[
  \beta_{i,\sigma}=\dim_K \widetilde{H}_0(\Delta_{\mid \sigma};K)=0.
\]
In fact $\Delta\cong \bar{G}$ is connected and the same happens removing one of the vertices of the circulant graph $\bar{G}$ since circulant graphs are biconnected.
Now, if we consider the squarefree degree $\sigma$ having total degree $n$, again, by Hochster formula, we obtain
 \[
  \beta_{i,\sigma}=\dim_K \widetilde{H}_1(\Delta_{\mid \sigma};K).
 \]
In this case $\Delta_{\mid \sigma}\cong\Delta\cong\bar{G}$ and the chain complex of $\Delta$
\[
\CC: 0\to C_{1} \stackrel{\partial_1}{\to} C_0 \stackrel{\partial_0}{\to} C_{-1} \to 0,
\]
has the two homologies $\widetilde{H}_{0}=\widetilde{H}_{-1}=0$. Therefore  
 \[
\dim_K \widetilde{H}_1(\bar{G};K)=\beta_{i,\sigma}=  f_1-f_0+f_{-1}
 \]
and the assertion follows by Remark \ref{cliqueindependent}.
\end{proof}
\begin{Example}
 Let $G=C_8(\{2,3\})$ that is $\bar{S}=\{1,4\}$ (see Figure \ref{C8}). We observe that it satisfies conditions (3) of Theorem \ref{CM1}. Its Cohen-Macaulay type by Theorem \ref{CMtype} is
 \[
  \binom{8}{2}-8(2+1)+1=5.
 \]
\begin{center}

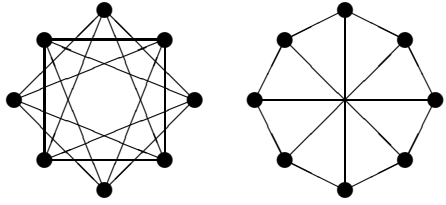
\begin{figure}

\setlength{\unitlength}{.4cm}
\begin{picture}(14,7)

\put(00,03){\circle*{.5}}
\put(01,05){\circle*{.5}}

\put(03,06){\circle*{.5}}
\put(05,05){\circle*{.5}}

\put(06,03){\circle*{.5}}
\put(05, 1){\circle*{.5}}

\put(03,0){\circle*{.5}}
\put(01,1){\circle*{.5}}

\put(00,03){\line(1,1){3}}
\put(00,03){\line(1,-1){3}}

\put(00,03){\line(5,2){5}}
\put(00,03){\line(5,-2){5}}

\put(06,03){\line(-1,-1){3}}
\put(06,03){\line(-1,1){3}}

\put(06,03){\line(-5,2){5}}
\put(06,03){\line(-5,-2){5}}

\put(01,01){\line(1,0){4}}
\put(01,01){\line(0,1){4}}

\put(05,05){\line(-1,0){4}}
\put(05,05){\line(0,-1){4}}

\put(03,00){\line(2,5){2}}
\put(03,00){\line(-2,5){2}}

\put(03,06){\line(2,-5){2}}
\put(03,06){\line(-2,-5){2}}

%\put(03,06){\line(1,-1){3}}
%\put(03,06){\line(-1,-1){3}}

% 
% \put(03,00){\line(2,1){2}}
% \put(03,00){\line(-2,1){2}}
% 
% \put(00,03){\line(1,0){6}}
% \put(03,00){\line(0,1){6}}
% 
% \put(01,01){\line(1,1){4}}

%\put(08,05){\line(1,-1){4}}

%Dual

\put(08,03){\circle*{.5}}
\put(09,05){\circle*{.5}}

\put(11,06){\circle*{.5}}
\put(13,05){\circle*{.5}}

\put(14,03){\circle*{.5}}
\put(13, 1){\circle*{.5}}

\put(11,0){\circle*{.5}}
\put(09,1){\circle*{.5}}

\put(08,03){\line(1,2){1}}
\put(08,03){\line(1,-2){1}}

\put(14,03){\line(-1,-2){1}}
\put(14,03){\line(-1,2){1}}

\put(11,06){\line(2,-1){2}}
\put(11,06){\line(-2,-1){2}}

\put(11,00){\line(2,1){2}}
\put(11,00){\line(-2,1){2}}

\put(08,03){\line(1,0){6}}
\put(11,00){\line(0,1){6}}

\put(09,01){\line(1,1){4}}

\put(09,05){\line(1,-1){4}}

\end{picture}
\caption{$G=C_8(\{2,3\}$ and $C_8(\{1,4\})\cong \Delta(G)$.}\label{C8} 
 
\end{figure}

\end{center}

\end{Example}

\begin{Remark}
 We observe that the rings satisfying Theorem \ref{CMtype} are level. For a description of \textit{level} algebras see Chapter 5.4 and 5.7 of \cite{BH2}. 
\end{Remark}

\begin{Corollary}\label{Gor}
 Let $G$ be the circulant graph $C_n(S)$ with $S\subset \{1,\ldots,\left \lfloor\frac{n}{2}\right \rfloor\}$ and $s=|S|$. The following conditions are equivalent:
  \begin{enumerate}
  \item $G$ is Gorenstein of dimension $2$;
  \item $S=\{1,\ldots,\hat{i},\ldots,n\}$ and  $\gcd(n,i)=1$ with $n\geq 4$;
  \item $\Delta(G)\cong\bar{G}$ is  a $n$-gon with $n\geq 4$.
 \end{enumerate}
\end{Corollary}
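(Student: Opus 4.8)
The plan is to reduce the Gorenstein condition to a single numerical equation and then read off the combinatorial shape of $\bar G$. Recall that a Cohen--Macaulay ring is Gorenstein precisely when its Cohen--Macaulay type equals $1$. By Theorem \ref{CMtype} the type of $R/I(G)$ for a Cohen--Macaulay circulant graph of dimension $2$ is exactly $h_2$, and by Theorem \ref{CM1} being Cohen--Macaulay of dimension $2$ is equivalent to $\Delta(G)\cong\bar G$ being connected of dimension $1$. So I would first record the reduction: condition $(1)$ holds if and only if $\bar G$ is connected of dimension $1$ and $h_2 = 1$.

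The heart of the argument is the computation that, under the connectedness hypothesis, $h_2 = 1$ is equivalent to $|\bar S| = 1$. Using the two cases of Theorem \ref{CMtype}, I would set each expression equal to $1$ and solve for $s$. In the ``otherwise'' case $\binom n2 - n(s+1)+1 = 1$ forces $\frac{n-1}{2} = s+1$, which admits an integer solution only for $n$ odd, giving $s = \frac{n-3}{2}$ and hence $|\bar S| = \left\lfloor\frac n2\right\rfloor - s = 1$; in the case $n$ even with $\frac n2 \in S$ one similarly gets $s = \frac n2 - 1$ and $|\bar S| = 1$. Conversely, a singleton $\bar S = \{i\}$ with $i \neq \frac n2$, substituted back, yields $h_2 = 1$ in both situations. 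The delicate point is the leftover singleton $\bar S = \{\frac n2\}$ (with $n$ even and $\frac n2 \notin S$): here the ``otherwise'' formula gives $h_2 = 1 - \frac n2 \neq 1$, so it already fails $h_2 = 1$, and consistently $\bar G = C_n(\{\frac n2\})$ is a perfect matching, hence disconnected and excluded by Cohen--Macaulayness. Thus, granting connectedness, $h_2 = 1$ holds exactly when $\bar S = \{i\}$ is a singleton.

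Next I would translate $|\bar S| = 1$ together with connectedness and dimension into $(2)$ and $(3)$. Writing $\bar S = \{i\}$, the complement $\bar G = C_n(\{i\})$ decomposes into $\gcd(n,i)$ disjoint cycles, each of length $n/\gcd(n,i)$ (the cosets of $\langle i\rangle$ in $\ZZ_n$); it is connected if and only if $\gcd(n,i) = 1$, in which case it is a single $n$-cycle, i.e. an $n$-gon. The dimension of $\Delta(G)\cong\bar G$ equals $1$ rather than $2$ exactly when $\bar G$ is triangle-free, which for the $n$-gon means $n \geq 4$ (the case $n = 3$ being the triangle $K_3$, of dimension $2$). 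Since $\bar S = \{i\}$ means $S = \{1,\dots,\widehat{i},\dots,\left\lfloor\frac n2\right\rfloor\}$, this establishes $(1)\Leftrightarrow(2)$ and identifies both with $(3)$. For the converse direction of $(2)\Leftrightarrow(3)$ I would use that an $n$-gon is connected and $2$-regular: $2$-regularity of the circulant $C_n(\bar S)$ forces $2|\bar S| = 2$ with $\frac n2 \notin \bar S$, hence $|\bar S| = 1$, and connectedness forces $\gcd(n,i) = 1$.

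The main obstacle is the case analysis in the second step: one must carefully track the parity of $n$ and the membership of $\frac n2$ in $S$, and in particular notice that the apparently admissible singleton $\bar S = \{\frac n2\}$ does not satisfy $h_2 = 1$ and is independently ruled out by connectedness. The remainder is bookkeeping, leaning on Remark \ref{cliqueindependent} to pass between $\Delta(G)$ and $\bar G$ and on the standard description of the connected components of the single-jump circulant graph $C_n(\{i\})$.
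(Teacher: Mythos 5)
Your proposal is correct, and its computational core --- reducing Gorenstein to Cohen--Macaulay of type $1$, reading the type as $h_2$ via Theorem \ref{CMtype}, and solving the two quadratics $\binom{n}{2}-n(s+\frac{1}{2})+1=1$ and $\binom{n}{2}-n(s+1)+1=1$ to force $s=\left\lfloor\frac{n}{2}\right\rfloor-1$, i.e.\ $|\bar{S}|=1$ --- is exactly the paper's argument for $(1)\Rightarrow(2)$, as is the Hamiltonian-cycle description of $C_n(\{i\})$ with $\gcd(n,i)=1$ for $(2)\Rightarrow(3)$. The one leg where you genuinely diverge is how the loop is closed: the paper proves $(3)\Rightarrow(1)$ in one conceptual step by citing that a simplicial $1$-sphere is Gorenstein (Corollary 5.6.5 of \cite{BH2}), whereas you re-enter through Theorems \ref{CM1} and \ref{CMtype} run forwards (connected and triangle-free gives Cohen--Macaulay of dimension $2$, then $h_2=1$ gives type $1$, hence Gorenstein). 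Both are valid; the paper's route is shorter and does not depend on the type formula a second time, while yours is more self-contained and has the side benefit of forcing a careful boundary analysis --- your explicit treatment of the parity of $n$, of the excluded singleton $\bar{S}=\{\frac{n}{2}\}$ (a perfect matching, disconnected, with the formula giving $h_2=1-\frac{n}{2}$), and of the condition $n\geq 4$ via triangle-freeness of the $n$-gon is more thorough than the paper's proof, which leaves these points implicit.
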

\begin{proof}
 $(1)\Rightarrow (2)$. $G$ is Gorenstein if and only if $G$ is Cohen-Macaulay of type $1$. Hence by Theorem \ref{CM1} $\Delta(G)$ is connected that is $\gcd{(n,\bar{S})}=1$. Moreover by Theorem \ref{CMtype} $h_2=1$ and solving the two quadratic equations
 \[
  \binom{n}{2}-n(s+\frac{1}{2})+1=1,\, \binom{n}{2}-n(s+1)+1=1,
 \]
we obtain respectively 
\[
 n=2s+2 \mbox{ and }n=2s+3.
\]
In both cases $s=\left \lfloor\frac{n}{2}\right \rfloor-1$. Hence $\bar{S}=i$ with $\gcd(i,n)=1$ and the assertion follows.
 
 $(2)\Rightarrow (3)$. Let $\bar{S}=\{i\}$ with $\gcd(n,i)=1$. We  easily observe that the vertices
 \[
  0,i,\ldots, (n-1)i\mod n
 \]
and edges
 \[
  \{0,i\},\{i, 2i\},\ldots, \{(n-1)i, (n)i\equiv 0 \mod n\}
 \]
define a Hamiltonian cycle that is $\bar{G}$ itself.

$(3)\Rightarrow (1)$. Since $\Delta(G)$ is a simplicial $1$-sphere is Gorenstein of Krull dimension $2$ (see Corollary 5.6.5 of \cite{BH2}).
\end{proof}

We observe that in Theorem 4.1 of \cite{EMT} the Cohen-Macaulayness of the graphs described in Corollary \ref{Gor} has been studied by a different point of view.

\end{document}